\numberwithin{equation}{section}
\newcommand{\R}{{\mathbb R}}
\newcommand{\C}{{\mathbb C}}
\newcommand{\be}{\begin{eqnarray}}
\newcommand{\ben}{\begin{eqnarray*}}
\newcommand{\en}{\end{eqnarray}}
\newcommand{\enn}{\end{eqnarray*}}
\newcommand{\pa}{\partial}
\newcommand{\ov}{\overline}
\newtheorem{theorem}{Theorem}[section]
\newtheorem{lemma}[theorem]{Lemma}
\newtheorem{remark}[theorem]{Remark}
\definecolor{rot}{rgb}{1.000,0.000,0.000}
\definecolor{rot1}{rgb}{0.000,0.000,0.000}
\begin{document}
\renewcommand{\theequation}{\arabic{section}.\arabic{equation}}
\begin{titlepage}
\title{On a Robin-type non-singular coupling scheme for solving the wave scattering problems}

\author{
Xiaojuan Liu\thanks{School of Mathematical Sciences, University of Electronic Science and Technology of China, Chengdu, Sichuan 611731, China. Email:{\tt lxj$_-$math@163.com}},
Maojun Li\thanks{School of Mathematical Sciences, University of Electronic Science and Technology of China, Chengdu, Sichuan 611731, China. Email:{\tt limj@uestc.edu.cn}},
Tao Yin\thanks{LSEC, Institute of Computational Mathematics and Scientific/Engineering Computing, Academy of Mathematics and Systems Science, Chinese Academy of Sciences, Beijing 100190, China. Email:{\tt yintao@lsec.cc.ac.cn}}}
\date{}
\end{titlepage}
\maketitle

\begin{abstract}
This paper studies a non-singular coupling scheme for solving the acoustic and elastic wave scattering problems and its extension to the problems of Laplace and Lam\'e equations and the problem with a compactly supported inhomogeneity is also briefly discussed. Relying on the solution representation of the wave scattering problem, a Robin-type artificial boundary condition in terms of layer potentials whose kernels are non-singular, is introduced to obtain a reduced problem on a bounded domain. The wellposedness of the reduced problems and the a priori error estimates of the corresponding finite element discretization are proved. Numerical examples are presented to demonstrate the accuracy and efficiency of the proposed method.

{\bf Keywords:} Artificial boundary condition, integral representation, acoustic problem, elastic problem
\end{abstract}

\section{Introduction}
\label{sec:1}
\noindent Developing efficient numerical methods for solving the wave scattering problems, especially in an unbounded domain, is a fundamental task in many areas of scientific/engineering applications including computational optics, mechanics and electromagnetics, and geophysical exploration, to name a few. The method of using an artificial boundary condition (ABC) \cite{F84,HB99,HW13,ZMD21}, local or nonlocal \cite{BH00,KKS21,SPD06,VBA22}, to truncate the unbounded domain has been proved to be highly efficient and has been extensively studied for many years. In particular, the exact ABC can be derived by means of the so-called Dirichlet-to-Neumann (DtN) map which can be defined based on the Fourier series expansion of the solutions \cite{GYX17,G99,HNPX11,XY21} or the boundary integral operators \cite{DFFS22,ES08,HW08,HX11,YHX17} associated with the problems exterior to the artificial boundary. Use of the boundary integral operators is more flexible since it is not necessary to restrict the artificial boundary to be a circle or a sphere.

An alternative way to treat the wave scattering problem in an unbounded domain is to introduce the corresponding boundary integral equation (BIE) associated with the problems exterior to the artificial boundary and combine it with the boundary value problem inside the artificial boundary. Then relying on the FEM and boundary element method (BEM) for the discretization of the interior problem and the BIE, respectively, the well-known FEM-BEM coupling scheme results \cite{B07,F18,HS16,HM06,HSS17,MMP20,YRX18}. To overcome the calculation of the singular integrals involved in the FEM-BEM coupling scheme, a non-singular coupling idea is developed in \cite{XZH19} to propose an iterative algorithm for solving the exterior problem of the Laplace equation. This method utilizes two artificial boundaries and a special Green's function that satisfies a zero Dirichlet boundary condition on a circle/spherical boundary which locates inside another artificial boundary. However, this method can not be trivially extended to the wave scattering problems since unlike the problem of Laplace equation, it is difficult to compute such Green's functions for the problems of Helmholtz, Navier, or Maxwell equations.

This paper proposes a simpler non-singular coupling scheme for solving the acoustic and elastic wave scattering problems with Neumann boundary conditions, and it can be easily extended to the problems of Laplace and Lam\'e equations and the problem with a compactly supported inhomogeneity. The new idea is to use the exact solution representation of the original boundary value problem and then impose a Robin-type boundary condition on an artificial boundary. Due to the Neumann boundary condition, only Dirichlet data on the boundary of the obstacle is unknown in the ABC, and more importantly, the new ABC is non-singular and can be viewed as another type of the DtN map. Then the wellposedness of the reduced problem is proved based on the Fredholm alternative argument. Incorporating with the finite element method, the a priori error estimates in both $L^2$ and $H^1$ norms are derived. Numerical examples demonstrate the accuracy and efficiency of the proposed method. The application of the new method to the problems with Dirichlet boundary conditions or transmission boundary conditions is non-trivial since the Neumann data on the boundary of the obstacle is unknown implying that the ABC will be of Neumann-to-Neumann type. This will be left for future work.

This paper is organized as follows. The original wave scattering problems are described in Section~\ref{sec:2} and then Section~\ref{sec:3} discusses the proposed non-singular coupling method including the derivation of the non-singular artificial boundary condition (Section~\ref{sec:3.1}), the wellposedness of the reduced problem (Section~\ref{sec:3.2}) and the extension of this strategy to the problems of Laplace/Lam\'e equations (Section~\ref{sec:3.3}) and the problem with a compactly supported inhomogenity (Section~\ref{sec:3.4}). The finite element approximation of the reduced boundary value problem is studied in Section~\ref{sec:4} and a priori error estimates are proved. Finally, the numerical experiments are shown in Section \ref{sec:5} to demonstrate the accuracy and efficiency of the presented approach.

\section{Wave scattering problems}
\label{sec:2}
\noindent As shown in Fig.~\ref{model}(a), let $\Omega\subset\R^2$ be a bounded and simply connected domain with, for simplicity, smooth boundary $\Gamma:=\pa\Omega$ and denoted by $\Omega^{c}:=\mathbb{R}^{2}\backslash\ov{\Omega}$ the unbounded exterior domain. In this work, the following model exterior acoustic and elastic wave scattering problems for $u:\Omega^c\rightarrow \C^\sigma$ with Neumann-type boundary conditions will be considered:
\begin{align}
    \mathcal{L}u&=0 \quad\mbox{in}\quad\Omega^{c}, \label{Lequ-1}\\
    Tu&=g \quad\mbox{on}\quad\Gamma, \label{Lequ-2}
  \end{align}
where $\mathcal{L}$ represents one of the following linear differential operators:
\begin{equation}\label{Lequ-3}
  \mathcal{L}u=
  \begin{cases}
  \Delta u+k^{2}u &(\mathrm{Acoustic}, \sigma=1),\cr
  \Delta^{*}u+ \rho\omega^{2}u &(\mathrm{Elastic}, \sigma=2).
  \end{cases}
\end{equation}
Here, $\omega>0$ denotes the angular frequency, $k=\omega/c$ denotes the acoustic wave number with $c>0$ being the acoustic wave speed, and $\rho>0$ is a density constant. The Lam\'e operator $\Delta^{*}$ is defined by
\begin{equation*}
\Delta^{*}=\mu \Delta + (\lambda + \mu)\nabla\nabla\cdot,
\end{equation*}
where $\lambda, \mu$ denotes the Lam\'e parameters satisfying $\mu>0, \lambda+\mu>0$. For the acoustic case, the Neumann boundary operator $T$ is given by $Tu=\frac{\partial u}{\partial \bm{n}}$ where ${\bm n}$ denotes the unit outward normal to the boundary $\Gamma$. Alternatively, for the elastic case, $T$ is defined as
\ben
Tu= 2\mu\frac{\partial u}{\partial\bm{n}} + \lambda\bm{n}~\nabla\cdot u + \mu\bm{n}^{\bot}(\partial_{2}u_{1}-\partial_{1}u_{2}),
\enn
where $\bm{n}^{\bot}:=(-n_{2},n_{1})^{\top}$ with $\bm{n}=(n_1,n_2)^\top$. Given an incident field $u^i$ satisfying $\mathcal{L}u^i=0$ in $\R^2$, for example, the plane wave, the solution $u$ represents the scattered field and then the boundary data $g$ on $\Gamma$ will be given by $g=-Tu^i$.

Moreover, the ensurance of the wellposedness of the exterior boundary value problems requires the following suitable boundary conditions at infinity:
\be
\label{RC-acoustic}
\lim_{r\rightarrow\infty} r^{1/2}\left(\frac{\partial u}{\partial r}-iku\right)=0,\quad r=|x|,
\en
for the acoustic case and
\be
\label{RC-elastic}
\lim_{r\rightarrow\infty} r^{1/2}\left(\frac{\partial u_t}{\partial r}-ik_tu_t\right)=0,\quad r=|x|,\;\;t=p,s,
\en
for the elastic case where $u_{p}$ and $u_{s}$ represent the compressional and the shear waves with wave numbers $k_{p}$ and $k_{s}$, respectively, which are given as
\begin{equation*}
k_{p}^{2} = \frac{\rho\omega^{2}}{\lambda + 2\mu},\qquad k_{s}^{2} =  \frac{\rho\omega^{2}}{\mu}.
\end{equation*}

\begin{theorem}\cite{CK19} \label{Theorem1}
The exterior boundary value problem (\ref{Lequ-1})-(\ref{Lequ-2}) is uniquely solvable.
\end{theorem}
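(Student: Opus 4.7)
The plan is to establish the two halves separately: uniqueness by a Green's-identity/radiation-condition argument combined with Rellich's lemma, and existence by reducing to a boundary integral equation of the second kind and invoking the Fredholm alternative. The acoustic case $\sigma=1$ and the elastic case $\sigma=2$ are treated in parallel, with the latter requiring the Helmholtz/Kupradze decomposition into compressional and shear parts.

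For uniqueness in the acoustic setting I would assume $g=0$ and let $u$ be a radiating solution. Choosing a ball $B_R$ with $\overline{\Omega}\subset B_R$ and applying Green's first identity on $B_R\cap\Omega^c$, together with $\partial u/\partial\bm{n}=0$ on $\Gamma$, expresses a bulk integral involving $|\nabla u|^2-k^2|u|^2$ in terms of the boundary integral over $\partial B_R$. Taking imaginary parts and inserting the Sommerfeld condition (\ref{RC-acoustic}) shows $\int_{\partial B_R}|u|^2\,ds\to 0$; Rellich's lemma together with unique continuation then forces $u\equiv 0$ in $\Omega^c$. The elastic case proceeds identically after decomposing $u=u_p+u_s$ into compressional and shear fields, each satisfying a vector Helmholtz equation with wave number $k_p$ or $k_s$, using Betti's formula in place of Green's identity and applying Rellich separately to $u_p$ and $u_s$ via (\ref{RC-elastic}).

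For existence I would use a combined-field layer ansatz that rules out spurious interior resonances. In the acoustic case, look for
\[
u(x)=\int_{\Gamma}\left\{\frac{\partial\Phi(x,y)}{\partial\bm{n}(y)}-i\eta\,\Phi(x,y)\right\}\varphi(y)\,ds(y),\quad x\in\Omega^c,
\]
with $\Phi$ the outgoing fundamental solution of the Helmholtz equation and $\eta\in\R\setminus\{0\}$. Imposing (\ref{Lequ-2}) yields a second-kind equation $(I+A)\varphi=2g$ on $\Gamma$ with $A$ compact on the appropriate Sobolev trace space. Injectivity of $I+A$ follows from the exterior uniqueness proved above together with the choice of $\eta$ excluding eigenvalues of the associated interior problem, and the Fredholm alternative then delivers a unique density $\varphi$, hence a unique $u$. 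The elastic case is completely analogous, with the Kupradze fundamental tensor and a matching combination of single and double stress/displacement potentials replacing $\Phi$ and $\partial\Phi/\partial\bm{n}$.

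The main obstacle is precisely the careful handling of spurious resonances: a naive single- or double-layer ansatz fails at frequencies that coincide with Dirichlet or Neumann eigenvalues of an associated interior boundary value problem, which forces the introduction of the coupling parameter $\eta$ (and its elastic analogue). Once this is in place, everything reduces to the standard Fredholm framework as carried out in detail in \cite{CK19}, which I would cite for the remaining technical estimates and mapping properties of $A$ on $H^{-1/2}(\Gamma)$.
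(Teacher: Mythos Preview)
The paper does not prove this theorem at all; it is stated with a bare citation to \cite{CK19} and used as background. Your sketch is essentially the classical argument one finds in that reference, so in spirit you are doing exactly what the authors intend the reader to look up.

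There is, however, a technical slip in your existence half. The combined-field ansatz you write, $u=(D_\Gamma-i\eta S_\Gamma)\varphi$, is the standard one for the exterior \emph{Dirichlet} problem. Here the boundary condition \eqref{Lequ-2} is of Neumann type, and applying $T$ to your ansatz brings in the hypersingular operator $N=T D_\Gamma$, a pseudodifferential operator of order $+1$ which is \emph{not} compact; the resulting boundary equation is therefore not of the form $(I+A)\varphi=2g$ with $A$ compact on $H^{-1/2}(\Gamma)$ as you assert. In \cite{CK19} the exterior Neumann problem is instead treated via a single-layer ansatz modified by a suitable regularizing term (so that the equation is genuinely of the second kind and interior Dirichlet resonances are excluded), and the elastic analogue proceeds similarly with the Kupradze tensor. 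Your uniqueness argument via Green/Betti identities and Rellich's lemma is correct; only the specific layer ansatz in the existence part needs to be replaced.
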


\begin{remark}
The discussed non-singular coupling scheme in the following sections can be easily extended to the problems with the Robin boundary condition as well as the three-dimensional problems and the electromagnetic scattering problems, and the corresponding investigation follows analogously.
\end{remark}

\begin{figure}[htbp]
 \centering
 \begin{tabular}{cc}
 \includegraphics[scale=0.25]{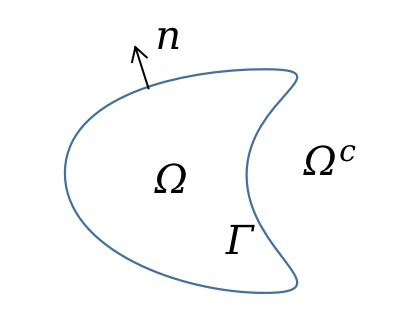} &
 \includegraphics[scale=0.25]{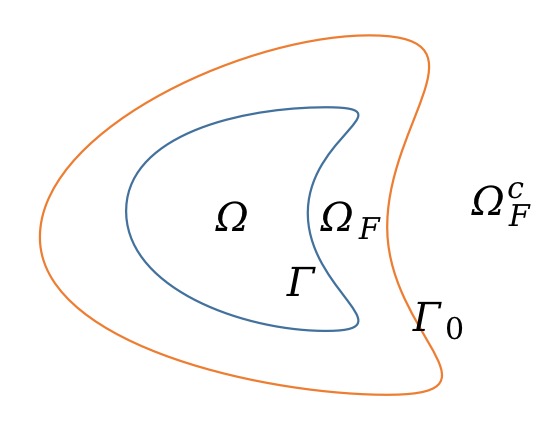} \\
 (a) Problem \eqref{Lequ-1}-\eqref{Lequ-2} & (b) Problem \eqref{Tru-equ1}-\eqref{Tru-equ3}
 \end{tabular}
\caption{Configuration of the computational domain}
 \label{model}
 \end{figure}
\section{Non-singular coupling method}
\label{sec:3}

\noindent In this section, we propose a general non-singular coupling method for solving the acoustic and elastic scattering problems (\ref{Lequ-1})-(\ref{Lequ-2}) together with the corresponding wellposedness analysis. The application of a simplified version to the problems of Laplace and Lam\'e equations (i.e. $\omega=0$) as well as an extension to the problems with a compactly supported inhomogeneity will also be discussed.

\subsection{Truncation with non-singular integrals}
\label{sec:3.1}

\noindent Let $\Gamma_{0}$ be a smooth closed curve which is large enough to enclose the entire region $\Omega$, see Fig.~\ref{model}(b). Then the artificial boundary $\Gamma_{0}$ decomposes the exterior domain $\Omega^{c}$ into two subdomains $\Omega_{F}$ and $\Omega_{F}^{c}$ respectively, where $\Omega_{F}$ is the region between $\Gamma$ and $\Gamma_{0}$, and $\Omega_{F}^{c}=\mathbb{R}^{2}\backslash \overline{\Omega\cup\Omega_{F}}$ is the unbounded exterior region. The two boundaries $\Gamma$ and $\Gamma_0$ are well separated such that $\min_{x\in\Gamma_0, y\in\Gamma}|x-y|\geq c_0 $ with $c_0$ being a positive constant.

Let $u^*(x,y)$ be the fundamental solution of the Helmholtz or Navier equations in $\R^2$ which are given by
\begin{equation*}
 u^{\ast}(x,y)= \frac{i}{4}H_{0}^{(1)}(k|x-y|),
\end{equation*}
for the acoustic case and
\begin{equation*}
 u^{\ast}(x,y)= \frac{i}{4\mu}H_{0}^{(1)}(k_{s}|x-y|)\mathbf{I}+\frac{i}{4\rho\omega^2} \nabla_x\nabla_x \left[ H_{0}^{(1)}(k_{s}|x-y|)- H_{0}^{(1)}(k_{p}|x-y|)\right],
\end{equation*}
for the elastic case where $H_{0}^{(1)}$ denotes the Hankel function of the first kind with order zero and $\mathbf{I}$ is the  $2\times 2$ identity matrix. It is known that the solutions $u$ in $\Omega^c$ admit the representation
\be
\label{solrep}
u(x)=\mathcal{B}(\Gamma,u)(x):= D_{\Gamma}(u)(x)-S_{\Gamma}(Tu)(x),\quad x\in\Omega^c,
\en
where the single-layer operator $S_{\Gamma}$ and double-layer operator $D_{\Gamma}$ are defined as
\begin{equation*}
  \begin{aligned}
 &S_{\Gamma}(Tu)(x)=\int_{\Gamma} u^{\ast}(x,y)T_{y}u(y)ds_{y},\quad x\in\Omega^c,\\
 &D_{\Gamma}(u)(x)=\int_{\Gamma} (T_{y}u^{\ast}(x,y))^{\top}u(y)ds_{y},\quad x\in\Omega^c,
  \end{aligned}
 \end{equation*}
respectively. In particular, we get on $\Gamma_0$ that
\ben
u=\mathcal{B}(\Gamma,u)\quad\mbox{on}\quad\Gamma_0,
\enn
\ben
Tu=T\mathcal{B}(\Gamma,u)\quad\mbox{on}\quad\Gamma_0,
\enn
and therefore,
\be
\label{ABC}
\Lambda_{\alpha}(u-D_{\Gamma}(u))&= -\Lambda_{\alpha}(S_{\Gamma}(g))\quad\mbox{on}\quad\Gamma_0,
\en
where $\Lambda_{\alpha}v=Tv-i\alpha v$ with $\alpha\ne 0$ being an impedance coefficient. Noting that the integrals in (\ref{ABC}) are only for $x\in\Gamma_0$ and $y\in\Gamma$, and thus the boundary condition (\ref{ABC}) is non-singular.

Utilizing the non-singular boundary condition (\ref{ABC}) results into the following truncated boundary value problem: Given $g\in H^{-1/2}(\Gamma)^\sigma$, find $u\in H^1(\Omega_{F})^\sigma$ such that
\begin{align}
    \mathcal{L}u&=0\quad\mbox{in}\quad\Omega_{F}, \label{Tru-equ1}\\
    Tu(x)&=g\quad\mbox{on}\quad\Gamma, \label{Tru-equ2}\\
    \Lambda_{\alpha}(u-D_{\Gamma}(u))&= -\Lambda_{\alpha}(S_{\Gamma}(g))\quad\mbox{on}\quad\Gamma_0.\label{Tru-equ3}
  \end{align}
The uniqueness of the solution to the problem \eqref{Tru-equ1}-\eqref{Tru-equ3} is given in the following theorem.

\begin{theorem}\label{Thm-uniqueness}
The truncated problem \eqref{Tru-equ1}-\eqref{Tru-equ3} has at most one solution.
\end{theorem}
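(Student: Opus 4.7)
The plan is to set $g=0$ and show that any solution $u$ of the homogeneous version of \eqref{Tru-equ1}--\eqref{Tru-equ3} vanishes in $\Omega_F$. The key auxiliary object is $\hat u(x):=D_\Gamma(u|_\Gamma)(x)$, defined for $x\in\R^2\setminus\Gamma$; by the mapping properties of the double layer it satisfies $\mathcal{L}\hat u=0$ off $\Gamma$, obeys the radiation condition at infinity, has continuous co-normal trace $T\hat u|_\Gamma$, and jumps by $u|_\Gamma$ in the Dirichlet trace across $\Gamma$. The homogeneous ABC then reads $Tu-T\hat u = i\alpha(u-\hat u)$ on $\Gamma_0$.

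Three imaginary-part identities are assembled from Green's formulas. First, testing $u$ against itself in $\Omega_F$ and using $Tu=0$ on $\Gamma$ gives
\[
\Ima\int_{\Gamma_0}(Tu)\bar u\,ds=0.
\]
Second, applying the first Green identity to $\hat u$ separately in $\Omega$ and in $\Omega_F$ and subtracting, the Dirichlet jump $[\hat u]_\Gamma=u|_\Gamma$ together with the continuity of $T\hat u$ yields
\[
\Ima\int_{\Gamma_0}(T\hat u)\overline{\hat u}\,ds=\Ima\int_\Gamma(T\hat u)\bar u\,ds.
\]
Third, the second Green identity for the pair $(u,\bar{\hat u})$ in $\Omega_F$, again using $Tu=0$ on $\Gamma$, produces the mixed identity
\[
\Ima\int_{\Gamma_0}\bigl[(Tu)\overline{\hat u}+(T\hat u)\bar u\bigr]\,ds=\Ima\int_\Gamma(T\hat u)\bar u\,ds.
\]

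Setting $\phi:=u-\hat u$ on $\Gamma_0$ and substituting the ABC into the mixed identity, the cross terms collapse via $(T\hat u)\bar\phi=(Tu)\bar\phi-i\alpha|\phi|^2$ and $(Tu)\overline{\hat u}=(Tu)\bar u-(Tu)\bar\phi$, so that eliminating the right-hand sides through the first two identities leaves
\[
\alpha\int_{\Gamma_0}|\phi|^2\,ds=0.
\]
Since $\alpha\neq 0$, this forces $\phi=0$ on $\Gamma_0$ and then, via the ABC, $Tu=T\hat u$ on $\Gamma_0$ as well. The piecewise function $U:=u$ in $\Omega_F$, $U:=\hat u$ in $\Omega_F^c$ therefore has matching Cauchy data across $\Gamma_0$, is $C^1$ there, and solves $\mathcal{L}U=0$ in $\Omega^c$ with $TU=0$ on $\Gamma$ and the radiation condition, so Theorem~\ref{Theorem1} yields $U\equiv 0$ and hence $u\equiv 0$ in $\Omega_F$. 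The main difficulty is the careful bookkeeping of orientations and jump relations; the vector-valued analogues of $T$, $D_\Gamma$ and the double-layer jumps make the same argument carry over verbatim to the elastic case.
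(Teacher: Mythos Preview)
Your argument is correct, but it follows a genuinely different route from the paper's own proof. The paper does not work with $\hat u=D_\Gamma(u|_\Gamma)$ at all; instead it invokes the interior Green representation $u=-\mathcal{B}(\Gamma_0,u)+\mathcal{B}(\Gamma,u)$ in $\Omega_F$, sets $\hat u:=-\mathcal{B}(\Gamma_0,u)$, observes that the homogeneous ABC \eqref{Tru-equ3} is precisely $\Lambda_\alpha\hat u=0$ on $\Gamma_0$, and then appeals to uniqueness of the \emph{interior} impedance problem on $\Omega_F\cup\overline{\Omega}$ (via the variational identity and Holmgren's theorem) to conclude $\hat u\equiv 0$, hence $u=\mathcal{B}(\Gamma,u)$, which extends to a radiating solution of the homogeneous exterior problem. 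Your proof, by contrast, builds the same extension directly: three imaginary-part identities on $\Gamma_0$ force the Cauchy data of $u$ and of $D_\Gamma(u|_\Gamma)$ to match there, after which you glue and invoke Theorem~\ref{Theorem1}. What the paper's approach buys is brevity---one line of representation formula plus a standard interior impedance uniqueness result---while your approach is more self-contained, never leaving the exterior theory and making the role of the Robin parameter $\alpha$ fully explicit through the energy identity $\alpha\int_{\Gamma_0}|\phi|^2\,ds=0$. Both arguments implicitly require $\alpha\in\R\setminus\{0\}$ (or at least a sign condition), and both carry over to the elastic case exactly as you note.
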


\begin{proof}
Thanks to the uniqueness of the original boundary value problem \eqref{Lequ-1}-\eqref{Lequ-2}, it is only necessary to prove the equivalence of the solutions to the problems \eqref{Lequ-1}-\eqref{Lequ-2} and \eqref{Tru-equ1}-\eqref{Tru-equ3} in $\Omega_F$ with $g=0$.\\

\begin{itemize}
\item [$\bullet$]{\bf From \eqref{Lequ-1}-\eqref{Lequ-2} to \eqref{Tru-equ1}-\eqref{Tru-equ3}.} This direction is obvious.\\

\item [$\bullet$]{\bf From \eqref{Tru-equ1}-\eqref{Tru-equ3} to \eqref{Lequ-1}-\eqref{Lequ-2}.} Let $u$ be the solutions to the problem \eqref{Tru-equ1}-\eqref{Tru-equ3} with $g=0$. It is sufficient to prove that $u=\mathcal{B}(\Gamma,u)$ in $\Omega_F$ which can be continuously extended to the whole $\Omega^c$ as $\widetilde{u}=\mathcal{B}(\Gamma,u)$ in $\Omega^c$ which satisfies \eqref{Lequ-1}-\eqref{Lequ-2} with $g=0$. In fact, we know from the Green's presentation that
\ben\label{uni-equ2}
  u=-\mathcal{B}(\Gamma_{0}, u)+\mathcal{B}(\Gamma, u) \quad\mbox{in}\quad\Omega_F.
\enn
Let $\hat{u}=-\mathcal{B}(\Gamma_{0}, u)$ in $\Omega_F\cup\ov{\Omega}$. Obviously, $\mathcal{L}\hat u=0$ in $\Omega_F\cup\ov{\Omega}$. Moreover, the boundary condition \eqref{Tru-equ3} yields that
\ben
\Lambda_{\alpha}\hat{u}=0 \quad\mbox{on}\quad\Gamma_0.
\enn
Then it follows from the variational approach and Holmgren's  theorem that $\hat{u}=0$ in $\Omega_F\cup\ov{\Omega}$. Therefore, $u=\mathcal{B}(\Gamma,u)$ in $\Omega_F$.
\end{itemize}
This completes the proof.
\end{proof}

\subsection{Variational formulation and wellposedness}
\label{sec:3.2}

\noindent Now, we study the variational problem of \eqref{Tru-equ1}-\eqref{Tru-equ3} as follows: Find $u\in H^{1}(\Omega_{F})^\sigma$ such that
\begin{equation}\label{Var-equ1}
       \begin{array}{l l}
       a(u,v)+b(u,v)=\ell(v),~~~~~\forall v\in H^{1}(\Omega_{F})^\sigma,\\
  \end{array}
\end{equation}
where
\begin{equation}\label{Var-equ2}
  a(u,v):=\left\{
    \begin{aligned}
  &\int_{\Omega_{F}}\nabla u \cdot \nabla \bar{v}\mathrm{d}{x}-k^{2}\int_{\Omega_{F}}u\bar{v}d x~~~(\mathrm{Acoustic}),\\
   &\int_{\Omega_F} \left(\mu\nabla u:\nabla \bar{v} + (\lambda + \mu)~(\nabla\cdot u)~(\nabla\cdot \bar{v})-\rho\omega^{2} u\cdot \bar{v}\right)d x~(\mathrm{Elastic}),\\
  \end{aligned}
  \right.
\end{equation}

\begin{equation}\label{Var-equ3}
   b(u,v):=-i\alpha\int_{\Gamma_0}u\cdot \ov{v}ds- \int_{\Gamma_0} \Lambda_{\alpha} D_{\Gamma}(u)\cdot \ov{v}ds,
\end{equation}
and
\begin{equation}\label{Var-equ4}
  \ell(v):= -\int_{\Gamma}g\cdot \ov{v}ds- \int_{\Gamma_0} \Lambda_\alpha S_{\Gamma}(g)\cdot \ov{v}ds.
\end{equation}

In the following, we always write $A\lesssim B$ (resp. $A\gtrsim B$) for the inequality $A\le c B$ (resp. $A\ge cB$) with $c>0$ being a constant.

\begin{lemma}\label{Thm-garding}
The sesquilinear form $a(\cdot,\cdot)+b(\cdot, \cdot)$ in \eqref{Var-equ1} is bounded as
\begin{equation}\label{Var-bound}
\left|a(u,v)+b(u, v)\right|\lesssim \Vert u \Vert_{H^{1}(\Omega_{F})^\sigma}\Vert v \Vert_{H^{1}(\Omega_{F})^\sigma},\quad\forall u,v\in H^{1}(\Omega_{F})^\sigma,
\end{equation}
and satisfies a G{\aa}rding's inequality in the form
\begin{equation}\label{Var-garding}
      \begin{array}{l l}
\mathrm{Re}\{a(v,v)+b(v, v)\}+ c\Vert v \Vert_{H^{1-\epsilon}(\Omega_{F})^\sigma}^2\gtrsim \Vert v \Vert^{2}_{H^{1}(\Omega_{F})^\sigma},
 \end{array}
\end{equation}
where $c>0$ and $\epsilon \in (0,1/2)$ are constants.
\end{lemma}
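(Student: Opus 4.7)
The plan is to treat $a(u,v)$ and $b(u,v)$ separately, exploiting the crucial geometric fact $\min_{x\in\Gamma_0,y\in\Gamma}|x-y|\ge c_0>0$. This makes the kernels of $D_\Gamma$ and $S_\Gamma$ real-analytic when the source point lies on $\Gamma$ and the evaluation point on $\Gamma_0$, so $\Lambda_\alpha D_\Gamma$ (and $\Lambda_\alpha S_\Gamma$) act as smoothing operators mapping any Sobolev space on $\Gamma$ into $C^\infty(\Gamma_0)$. This observation is the workhorse for every boundary estimate below.

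For the boundedness \eqref{Var-bound}, I would apply Cauchy--Schwarz directly to $a(u,v)$ to get an $H^1(\Omega_F)^\sigma$-bound. For $b(u,v)$ the impedance piece $-i\alpha\int_{\Gamma_0} u\cdot\bar v\,ds$ is controlled by the trace inequality $\|w\|_{L^2(\Gamma_0)}\lesssim\|w\|_{H^1(\Omega_F)}$, while for $\int_{\Gamma_0}\Lambda_\alpha D_\Gamma(u)\cdot\bar v\,ds$ I would first apply Cauchy--Schwarz on $\Gamma_0$ and then use the smoothing property together with the Dirichlet trace to conclude $\|\Lambda_\alpha D_\Gamma u\|_{L^2(\Gamma_0)}\lesssim\|u|_\Gamma\|_{H^{1/2}(\Gamma)}\lesssim\|u\|_{H^1(\Omega_F)}$.

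For the G{\aa}rding inequality \eqref{Var-garding} I would first extract from $\mathrm{Re}\,a(v,v)$ a lower bound of the form $c\|v\|_{H^1}^2-C\|v\|_{L^2}^2$. In the acoustic case this is immediate from $\mathrm{Re}\,a(v,v)=\|\nabla v\|_{L^2}^2-k^2\|v\|_{L^2}^2$; in the elastic case it follows from discarding the nonnegative term $(\lambda+\mu)\|\nabla\cdot v\|_{L^2}^2$ and using $\mu>0$. Next I would bound $|\mathrm{Re}\,b(v,v)|$ by $\|v\|_{H^{1-\epsilon}(\Omega_F)}^2$: the boundary mass term is absorbed via the trace estimate $\|v\|_{L^2(\Gamma_0)}\lesssim\|v\|_{H^{1-\epsilon}(\Omega_F)}$, which is valid whenever $\epsilon\in(0,1/2)$, and the double-layer term is handled by the smoothing bound $\|\Lambda_\alpha D_\Gamma v\|_{L^2(\Gamma_0)}\lesssim\|v|_\Gamma\|_{H^{1/2-\epsilon}(\Gamma)}\lesssim\|v\|_{H^{1-\epsilon}(\Omega_F)}$ paired with $\|v\|_{L^2(\Gamma_0)}$ via Cauchy--Schwarz. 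Combining and using $\|v\|_{L^2}\le\|v\|_{H^{1-\epsilon}}$ yields the claim.

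The main delicate point I expect is the choice of the perturbation norm: for $a$ alone an $L^2(\Omega_F)$ perturbation would suffice, but the boundary integrals from $b$ introduce $\|v\|_{L^2(\Gamma_0)}^2$, which the interior $L^2(\Omega_F)$ norm does not dominate. The trace theorem forces something strictly stronger than $H^{1/2}(\Omega_F)$; any $H^{1-\epsilon}$ with $\epsilon<1/2$ does the job, and this is precisely the norm whose compact embedding into $H^1(\Omega_F)$ will later fuel the Fredholm alternative argument. Apart from this bookkeeping, each piece reduces to Cauchy--Schwarz, the trace theorem, and the smoothing action of layer potentials on separated curves.
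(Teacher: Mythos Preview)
Your proposal is correct and follows essentially the same route as the paper: Cauchy--Schwarz for $a$, pointwise smoothness of the layer-potential kernels on the separated curves $\Gamma$ and $\Gamma_0$ for $b$, and the fractional trace theorem $\|v\|_{L^2(\partial\Omega_F)}\lesssim\|v\|_{H^{1-\epsilon}(\Omega_F)}$ (valid for $\epsilon<1/2$) to absorb the boundary terms into the compact perturbation. The only cosmetic difference is that the paper invokes Korn's inequality in the elastic case, whereas you obtain the lower bound directly from $\mu\|\nabla v\|_{L^2}^2$ after discarding $(\lambda+\mu)\|\nabla\cdot v\|_{L^2}^2\ge 0$; since the bilinear form here is written with the full gradient rather than the symmetric strain, your shortcut is legitimate and Korn is in fact not needed.
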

\begin{proof}
It easily follows from the Cauchy-Schwarz inequality that
\be
\label{garding1}
  |a(u,v)|\lesssim \Vert u \Vert_{H^{1}(\Omega_{F})^\sigma}\Vert v \Vert_{H^{1}(\Omega_{F})^\sigma}.
\en
Moreover, for the acoustic case,
\be
\label{garding2}
a(v,v)=\Vert v \Vert_{H^{1}(\Omega_{F})^\sigma}^2- (1+k^2)\Vert v \Vert_{L^2(\Omega_{F})^\sigma}^2.
\en
For the elastic case, it follows from the Korn's inequality that
there exists a constant $c_1>0$ such that
\be
\label{garding3}
a(v,v)+c_1\Vert v \Vert_{L^2(\Omega_{F})^\sigma}^2\gtrsim \Vert v \Vert_{H^{1}(\Omega_{F})^\sigma}^2.
\en
Noting that $\min_{y\in\Gamma, x\in\Gamma_0}|x-y|\geq c_0>0$, there exists a constant $c_2>0$ such that
\ben
\left|T_{y} u^{\ast}(x,y) \right| \le c_2,\quad x\in\Gamma_0, y\in\Gamma,
\enn
and
\ben
\left|T_{x}(T_{y}u^{\ast}(x,y))^{\top}\right| \le c_2,\quad x\in\Gamma_0, y\in\Gamma.
\enn
Then we get
\ben
|b(u,v)|&&\le |\alpha|\Vert u\Vert_{L^2(\Gamma_0)^\sigma}\Vert v\Vert_{L^2(\Gamma_0)^\sigma}  + c_2(1+|\alpha|) \Vert u\Vert_{L^2(\Gamma)^\sigma}\Vert v\Vert_{L^2(\Gamma_0)^\sigma}\nonumber\\
&& \lesssim (\Vert u\Vert_{L^2(\Gamma_0)^\sigma}+\Vert u\Vert_{L^2(\Gamma)^\sigma})\Vert v \Vert_{L^2(\Gamma_0)^\sigma}.
\enn
Therefore,
\be
\label{garding4}
|b(u,v)|\lesssim \Vert u\Vert_{H^{1}(\Omega_{F})^\sigma}\Vert v\Vert_{H^{1}(\Omega_{F})^\sigma},
\en
and
\be
\label{garding5}
\mathrm{Re}\{b(v,v)\} +c_3 \Vert v\Vert_{H^{1-\epsilon}(\Omega_{F})^\sigma}^2 \ge 0,
\en
with $c_3>0$ and $\epsilon \in (0,1/2)$ being constants. Then \eqref{Var-bound} follows from (\ref{garding1}) and (\ref{garding4}), and \eqref{Var-garding} follows from (\ref{garding2})-(\ref{garding3}) and (\ref{garding5}), which completes the proof.
\end{proof}

Then Theorem~\ref{Thm-uniqueness} and Lemma~\ref{Thm-garding} together with the Fredholm alternative theorem yield the following wellposedness result.

\begin{theorem}\label{Thm-wellposedness}
The variational problem (\ref{Var-equ1}) admits a unique solution $u\in H^{1}(\Omega_{F})^\sigma$. Moreover, the inf-sup condition
\ben
\sup_{0\ne v\in H^{1}(\Omega_{F})^\sigma} \frac{|a(u,v)+b(u,v)|}{\Vert v\Vert_{H^{1}(\Omega_{F})^\sigma}} \gtrsim \Vert u\Vert_{H^{1}(\Omega_{F})^\sigma},\quad\forall u\in H^{1}(\Omega_{F})^\sigma,
\enn
and the priori bound
\ben
\Vert u\Vert_{H^{1}(\Omega_{F})^\sigma}\lesssim \Vert g\Vert_{H^{-1/2}(\Gamma)^\sigma},
\enn
hold.
\end{theorem}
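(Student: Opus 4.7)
The plan is to prove wellposedness via the Fredholm alternative, leveraging the uniqueness result Theorem~\ref{Thm-uniqueness} and the G\aa rding estimate Lemma~\ref{Thm-garding}. First, I would define the bounded operator $A : H^{1}(\Omega_F)^\sigma \to (H^{1}(\Omega_F)^\sigma)^*$ by $\langle Au, v\rangle := a(u,v)+b(u,v)$, together with the operator $C$ induced by the $H^{1-\epsilon}$ inner product. The G\aa rding inequality then says that $A + C$ is coercive, hence an isomorphism by Lax--Milgram, while the compact embedding $H^{1}(\Omega_F)^\sigma \hookrightarrow H^{1-\epsilon}(\Omega_F)^\sigma$ (Rellich--Kondrachov, using boundedness and smoothness of $\Omega_F$) makes $C$ compact. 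Consequently $A = (A+C) - C$ is a Fredholm operator of index zero.

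Second, Theorem~\ref{Thm-uniqueness} applied with $g = 0$ tells us that the homogeneous variational problem has only the trivial solution, so $A$ is injective; combined with Fredholm index zero, this gives bijectivity of $A$ with bounded inverse. Hence \eqref{Var-equ1} has a unique solution for every right-hand side $\ell$ in the dual, establishing the first claim. The inf-sup condition is then immediate: the continuity of $A^{-1}$ reads
\[
\Vert u\Vert_{H^{1}(\Omega_F)^\sigma} \lesssim \Vert Au\Vert_{(H^{1}(\Omega_F)^\sigma)^*} = \sup_{0\ne v}\frac{|a(u,v)+b(u,v)|}{\Vert v\Vert_{H^{1}(\Omega_F)^\sigma}},
\]
which is precisely the asserted lower bound.

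For the a priori estimate it suffices to control $\Vert \ell\Vert_{(H^{1}(\Omega_F)^\sigma)^*}$ by $\Vert g\Vert_{H^{-1/2}(\Gamma)^\sigma}$. The first term in \eqref{Var-equ4} is handled by the $H^{-1/2}$--$H^{1/2}$ duality pairing on $\Gamma$ combined with the trace inequality $\Vert v\vert_\Gamma\Vert_{H^{1/2}(\Gamma)^\sigma}\lesssim \Vert v\Vert_{H^{1}(\Omega_F)^\sigma}$. For the second term, the separation $\min_{x\in\Gamma_0,\,y\in\Gamma}|x-y|\geq c_0$ makes the kernels of $S_\Gamma$ and $T_x S_\Gamma$ smooth on $\Gamma_0\times\Gamma$, so that $\Lambda_\alpha S_\Gamma$ maps $H^{-1/2}(\Gamma)^\sigma$ boundedly into $L^2(\Gamma_0)^\sigma$ (in fact into any smoothness class), after which the trace inequality on $\Gamma_0$ completes the estimate. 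I expect the only subtle point to be the compactness of the perturbation $C$ in the G\aa rding inequality, but this is immediate from Rellich on the bounded Lipschitz domain $\Omega_F$; the remainder of the argument is a direct application of standard abstract machinery.
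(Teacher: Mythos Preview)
Your proposal is correct and follows exactly the route the paper takes: the paper simply states that Theorem~\ref{Thm-uniqueness} and Lemma~\ref{Thm-garding} together with the Fredholm alternative yield the result, and you have spelled out precisely that argument (G{\aa}rding plus Rellich gives Fredholm index zero, uniqueness gives injectivity, hence isomorphism, inf-sup, and the a~priori bound via estimating $\ell$). Your treatment is in fact more detailed than the paper's one-line justification, and all the steps you outline are standard and correct.
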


\subsection{Extension I. problems of Laplace and Lam\'e equations}
\label{sec:3.3}
\noindent
For the problems of Laplace and Lam\'e equations, i.e., the linear differential operator $\mathcal{L}$ is given by
\ben
\mathcal{L}u=\begin{cases}
\Delta u & (\mbox{Laplace}, \sigma=1), \cr
\Delta^*u & (\mbox{Lam\'e}, \sigma=2).
\end{cases}
\enn
Let $u^*(x,y)$ be the fundamental solution of the Laplace or Lam\'e equations in $\R^2$ which are given by
\begin{equation*}
 u^{\ast}(x,y)=\frac{1}{2\pi}\log|x-y|,
\end{equation*}
for the Laplace case and
\begin{equation*}
 u^{\ast}(x,y)=\frac{\lambda+3\mu}{4\pi\mu(\lambda+2\mu)}\left[-\log|x-y|\mathbf{I}+\frac{\lambda+\mu}{(\lambda+3\mu)}\frac{1}{|x-y|^{2}}(x-y)(x-y)^{T}\right],\\
\end{equation*}
for the Lam\'e case. The Neumann boundary operator $T$ for the Laplace (resp. Lam\'e) equation is the same as that for the acoustic (resp. elastic) case and the following boundary conditions at infinity are imposed:
\be
\label{RC-Laplace}
 u=O(1)\quad\mathrm{as}\quad r=|x|\rightarrow\infty,
\en
for Laplace case and
\be
\label{RC-Lame}
u(x)=-u^{\ast}(x,0)\bm{\Sigma}+\bm{w}(x)+O(|x|^{-1})\quad\mathrm{as}\quad |x|\rightarrow\infty,
\en
for Lam\'e case, where $\bm{w}(x)$ is a rigid motion defined by
\be
\bm{w}(x)=\bm{a}+b(-x_{2}, x_{1})^{\top},
\en
where $\bm{a}$ and $\bm{\Sigma}$ are constant vectors and $b$ is a constant. For the Laplace case, we additionally assume that $\int_{\Gamma} gds=0$. Let $\mathbb{G}$ be the space of constants for the Laplace case and be the space of rigid displacement defined as
\ben
\mathbb{G}:=\{\bm{a}+b(-x_{2}, x_{1})^{\top},\quad \bm{a}\in \R^2, b\in\R\}.
\enn
In the following, we always consider the solutions in $\mathbb{H}:=H^1(\Omega_F)^\sigma\backslash\mathbb{G}$.

The solution representation (\ref{solrep}) gives
\be
\label{ABC-2}
Tu=T\mathcal{B}(\Gamma,u)\quad\mbox{on}\quad\Gamma_0.
\en
Then we can get the following truncated boundary value problem: Given $g\in H^{-1/2}(\Gamma)^\sigma$, find $u\in \mathbb{H}$ such that
\begin{align}
    \mathcal{L}u&=0\quad\mbox{in}\quad\Omega_{F}, \label{Tru-equ1-2}\\
    Tu(x)&=g\quad\mbox{on}\quad\Gamma, \label{Tru-equ2-2}\\
    Tu-TD_{\Gamma}(u)&= -TS_{\Gamma}(g)\quad\mbox{on}\quad\Gamma_0,\label{Tru-equ3-2}
  \end{align}
in which we just use a Neumann-type boundary condition on $\Gamma_0$, i,e. $\alpha=0$. The uniqueness of the solution to the problem \eqref{Tru-equ1-2}-\eqref{Tru-equ3-2} is given in the following theorem.

\begin{theorem}\label{Thm-uniqueness-2}
The truncated problem \eqref{Tru-equ1-2}-\eqref{Tru-equ3-2} has at most one solution.
\end{theorem}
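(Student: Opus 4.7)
The plan is to adapt the proof of Theorem~\ref{Thm-uniqueness} to the Laplace/Lam\'e setting, with the essential modification that the Robin-style argument is replaced by a pure Neumann one, forcing us to work modulo the kernel $\mathbb{G}$. By the classical uniqueness (in the quotient space) of the original exterior problem for the Laplace/Lam\'e equations under \eqref{RC-Laplace}/\eqref{RC-Lame}, it suffices to show that if $u\in\mathbb{H}$ solves \eqref{Tru-equ1-2}-\eqref{Tru-equ3-2} with $g=0$, then modulo $\mathbb{G}$ one has $u=\mathcal{B}(\Gamma,u)$ in $\Omega_F$; the right-hand side then provides a continuous extension $\widetilde{u}:=\mathcal{B}(\Gamma,u)$ to all of $\Omega^c$ which solves \eqref{Lequ-1}-\eqref{Lequ-2} with $g=0$.

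Concretely, first I apply Green's representation in the bounded annular domain $\Omega_F$ to write $u=-\mathcal{B}(\Gamma_0,u)+\mathcal{B}(\Gamma,u)$, and set $\hat u:=-\mathcal{B}(\Gamma_0,u)$ on the bounded interior region $\Omega_F\cup\ov{\Omega}$. Then $\mathcal{L}\hat u=0$ there, and combining \eqref{Tru-equ3-2} with $Tu=0$ on $\Gamma$ yields $T\hat u=0$ on the sole boundary $\Gamma_0$. This is exactly the step where the argument must diverge from Theorem~\ref{Thm-uniqueness}: because the impedance term $-i\alpha v$ has been dropped in the simpler ABC \eqref{Tru-equ3-2}, the condition on $\Gamma_0$ is purely Neumann, so uniqueness for the interior Neumann problem on $\Omega_F\cup\ov{\Omega}$ for the Laplace (respectively Lam\'e) operator only yields $\hat u\in\mathbb{G}$ (a constant, respectively a rigid motion) rather than $\hat u\equiv 0$. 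This nontrivial interior Neumann kernel is the main obstacle and is precisely why the problem is formulated in the quotient space $\mathbb{H}$ from the outset.

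Finally, modulo $\mathbb{G}$ the identity $u=\mathcal{B}(\Gamma,u)$ holds in $\Omega_F$, so the extension $\widetilde u:=\mathcal{B}(\Gamma,u)$ to all of $\Omega^c$ is well-defined. Standard mapping properties of the layer potentials, together with $\int_\Gamma g\,ds=0$ in the Laplace case (which is trivially satisfied when $g=0$), ensure $\mathcal{L}\widetilde u=0$ in $\Omega^c$ and the asymptotic behavior \eqref{RC-Laplace} (respectively \eqref{RC-Lame}); the jump relations combined with $Tu=0$ on $\Gamma$ give $T\widetilde u=0$ on $\Gamma$. Uniqueness (modulo $\mathbb{G}$) of the exterior Neumann problem then forces $\widetilde u\in\mathbb{G}$, hence $u=0$ in $\mathbb{H}$, completing the proof.
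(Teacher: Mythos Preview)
Your proof is correct and follows essentially the same route as the paper's: apply Green's representation in $\Omega_F$, set $\hat u:=-\mathcal{B}(\Gamma_0,u)$, observe that the boundary condition \eqref{Tru-equ3-2} forces $T\hat u=0$ on $\Gamma_0$, and conclude from the interior Neumann problem. The paper simply writes ``$T\hat u=0$ on $\Gamma_0$ which implies that $\hat u=0$ in $\Omega_F\cup\ov{\Omega}$'' without further comment, whereas you are more careful in noting that the interior Neumann problem for Laplace/Lam\'e only pins down $\hat u$ modulo $\mathbb{G}$ and then carrying the argument through in the quotient space $\mathbb{H}$; this extra care is appropriate and makes explicit what the paper leaves implicit.
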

\begin{proof}
The proof is analogous to that for Theorem~\ref{Thm-uniqueness}. The only difference is that of $\hat u=-\mathcal{B}(\Gamma_0,u)$, it holds that $\mathcal{L}\hat u=0$ in $\Omega_F\cup\ov{\Omega}$ and $T\hat u=0$ on $\Gamma_0$ which implies that $\hat u=0$ in $\Omega_F\cup\ov{\Omega}$. This completes the proof.
\end{proof}

The variational problem of \eqref{Tru-equ1-2}-\eqref{Tru-equ3-2} reads: Find $u\in \mathbb{H}$ such that
\begin{equation}\label{Var-equ1-2}
       \begin{array}{l l}
       a(u,v)+b(u,v)=\ell(v),~~~~~\forall v\in H^{1}(\Omega_{F})^\sigma,\\
  \end{array}
\end{equation}
where
\begin{equation}\label{Var-equ2-2}
  a(u,v):=\left\{
    \begin{aligned}
  &\int_{\Omega_{F}}\nabla u \cdot \nabla \bar{v}\mathrm{d}{x}~~~(\mbox{Laplace}),\\
   &\int_{\Omega_F} \left(\mu\nabla u:\nabla \bar{v} + (\lambda + \mu)~(\nabla\cdot u)~(\nabla\cdot \bar{v})\right)d x~(\mbox{Lam\'e}),\\
  \end{aligned}
  \right.
\end{equation}
\begin{equation}\label{Var-equ3-2}
   b(u,v):= -\int_{\Gamma_{0}}TD_{\Gamma}(u)\cdot \ov{v}ds,
\end{equation}
and
\begin{equation}\label{Var-equ4-2}
  \ell(v):= -\int_{\Gamma_{0}}TS_{\Gamma}(g)\cdot \ov{v}ds. \\
\end{equation}
Then it follows from Lemma~\ref{Thm-garding} that the sesquilinear form $a(\cdot,\cdot)+b(\cdot, \cdot)$ in \eqref{Var-equ1-2} is bounded as
\begin{equation}\label{Var-bound-2}
\left|a(u,v)+b(u, v)\right|\lesssim \Vert u\Vert_{H^{1}(\Omega_{F})^\sigma}\Vert v\Vert_{H^{1}(\Omega_{F})^\sigma},\quad\forall u,v\in H^{1}(\Omega_{F})^\sigma,
\end{equation}
and satisfies a G{\aa}rding's inequality in the form
\begin{equation}\label{Var-garding-2}
      \begin{array}{l l}
\mathrm{Re}\{a(v,v)+b(v, v)\}+ c\Vert v\Vert_{H^{1-\epsilon}(\Omega_{F})^\sigma}^2\gtrsim \Vert v\Vert^{2}_{H^{1}(\Omega_{F})^\sigma},
 \end{array}
\end{equation}
where $c>0$ and $\epsilon \in (0,1/2)$ are constants. As a consequence, analogous wellposedness result as Theorem~\ref{Thm-wellposedness} can be obtained.

\subsection{Extension II. problems with a compactly supported inhomogeneity}
\label{sec:3.4}
\noindent
Now we consider the acoustic scattering problem in an inhomogeneous medium which is modeled by
\begin{align}
\label{ProblemInhomogeneous}
\Delta u+k^2nu&=0\quad\mbox{in}\quad\R^2,
\end{align}
where $n\not\equiv 1$ is the refractive index and $n-1$ is compactly supported such that $n(x)=1, |x|>R$ for some $R>0$, and $u=u^i+u^s$ with $u^i$ being the incident field satisfying $\Delta u^i+k^2u^i=0$ in $\R^2$ and $u^s$ being the scattered field which satisfied the radiation condition (\ref{RC-acoustic}). In particular, we assume that $\mathrm{Re}\{n\}>0$ and $\mathrm{Im}\{n\}\ge0$. The uniqueness of this boundary value problem is given in \cite{CK19}.

Let $\Gamma_0$ be a closed artificial boundary and denote by $\Omega_F$ and $\Omega_F^c$ the interior and exterior domain, respectively. The selection of $\Gamma_0$ is such that $n-1$ is compactly supported in a subset of $\Omega_F$. It follows from the Lippmann-Schwinger equation that the solution $u$ in $\Omega_F^c$ admits the representation
\ben
u=u^i-k^2V_{\Omega_F}((1-n)u) \quad\mbox{in}\quad\R^2,
\enn
where the volume potential $V$ is defined as
\ben
V_{\Omega}(\varphi)(x)=\int_{\Omega} u^*(x,y)\varphi(y)dy.
\enn
Now we can introduce a non-singular boundary condition on $\Gamma_0$ as
\ben
\Lambda_\alpha(u)+k^2\Lambda_\alpha V_{\Omega_F}((1-n)u)=\Lambda_\alpha(u^i) \quad\mbox{on}\quad\Gamma_0,
\enn
and then arrive at the following truncated boundary value problem: Given $u^i\in H_{loc}^1(\R^2)$ (thus, $\Lambda_\alpha(u^i)\in H^{-1/2}(\Gamma_0)$), find $u\in H^1(\Omega_F)$ such that
\begin{align}
  \label{InTru-equ1}
  \Delta u+k^2nu&=0\quad\mbox{in}\quad\Omega_F,\\
  \label{InTru-equ2}
  \Lambda_\alpha(u)+k^2\Lambda_\alpha V_{\Omega_F}((1-n)u)&=\Lambda_\alpha(u^i) \quad\mbox{on}\quad\Gamma_0.
  \end{align}

\begin{theorem}\label{Thm-uniqueness-3}
The truncated problem (\ref{InTru-equ1})-(\ref{InTru-equ2}) has at most one solution.
\end{theorem}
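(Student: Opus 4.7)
The plan is to mimic the strategies used for Theorem~\ref{Thm-uniqueness} and Theorem~\ref{Thm-uniqueness-2}: given an arbitrary solution $u\in H^1(\Omega_F)$ of the truncated problem with vanishing data (i.e.\ $u^i=0$), I would construct an extension $u_{ext}$ of $u$ to all of $\R^2$ which solves the original inhomogeneous acoustic problem \eqref{ProblemInhomogeneous} with zero incident field, and then invoke the uniqueness result cited from \cite{CK19} to conclude that $u\equiv 0$ in $\Omega_F$.

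First I would introduce the auxiliary function $\tilde u(x):=-k^2 V_{\Omega_F}((1-n)u)(x)$ as a volume potential defined on all of $\R^2$. Since $(1-n)u\in L^2(\Omega_F)$ with compact support in a strict subset of $\Omega_F$, the standard mapping properties of the Newtonian/volume potential give $\tilde u\in H^2_{loc}(\R^2)$ with $(\Delta+k^2)\tilde u=k^2(1-n)u$ in $\Omega_F$, $(\Delta+k^2)\tilde u=0$ in $\Omega_F^c$, and the Sommerfeld radiation condition \eqref{RC-acoustic}.

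Next, set $\hat u:=u-\tilde u$ in $\Omega_F$. Rewriting \eqref{InTru-equ1} as $(\Delta+k^2)u=k^2(1-n)u$ gives $(\Delta+k^2)\hat u=0$ in $\Omega_F$, while the boundary condition \eqref{InTru-equ2} with $u^i=0$ rearranges to $\Lambda_\alpha\hat u=0$ on $\Gamma_0$. Thus $\hat u$ solves the interior Helmholtz problem with a homogeneous impedance boundary condition; testing against $\overline{\hat u}$, integrating by parts, and extracting the imaginary part of the resulting identity forces $\hat u|_{\Gamma_0}=0$ (using $\alpha\neq 0$), after which $\partial_n\hat u|_{\Gamma_0}=i\alpha\,\hat u|_{\Gamma_0}=0$ as well, and Holmgren's theorem yields $\hat u\equiv 0$ in $\Omega_F$. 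Consequently $u=\tilde u$ in $\Omega_F$.

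Finally, define $u_{ext}(x):=\tilde u(x)$ for all $x\in\R^2$; since $\tilde u$ is globally $H^2_{loc}$ and agrees with $u$ on $\Omega_F$, the extension is admissible with no extra matching condition needed across $\Gamma_0$. It satisfies $\Delta u_{ext}+k^2 n\,u_{ext}=0$ in $\R^2$ (inside $\Omega_F$ via $u=\tilde u$ and the PDE, outside via $n\equiv 1$ and $(\Delta+k^2)\tilde u=0$) and inherits the radiation condition from $\tilde u$, so it is a full-space solution of \eqref{ProblemInhomogeneous} with zero incident field. Uniqueness of that problem then forces $u_{ext}\equiv 0$, hence $u\equiv 0$ in $\Omega_F$. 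The main obstacle I expect is the interior Robin uniqueness step for $\hat u$: the role of the condition $\alpha\neq 0$ (taken real) is essential precisely there, because without it one cannot extract both Dirichlet and Neumann traces of $\hat u$ on $\Gamma_0$ and apply Holmgren.
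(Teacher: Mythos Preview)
Your proposal is correct and follows exactly the approach the paper indicates: the paper omits the proof, stating only that it ``follows analogous to Theorem~\ref{Thm-uniqueness},'' and your argument is precisely that analogy, with the Lippmann--Schwinger volume potential $\tilde u=-k^2V_{\Omega_F}((1-n)u)$ playing the role that the layer-potential representation $\mathcal{B}(\Gamma,u)$ played there, and with $\hat u=u-\tilde u$ shown to vanish via the interior Robin identity plus Holmgren just as $\hat u=-\mathcal{B}(\Gamma_0,u)$ was handled before. Your flagged caveat about needing $\alpha$ real (or at least $\mathrm{Re}\,\alpha\neq 0$) for the imaginary-part extraction is apt and is equally implicit in the paper's own proof of Theorem~\ref{Thm-uniqueness}.
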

\begin{proof}
The proof follows analogous to Theorem~\ref{Thm-uniqueness} and thus is omitted here.
\end{proof}

The variational problem of \eqref{InTru-equ1}-\eqref{InTru-equ2} reads: Find $u\in H^{1}(\Omega_{F})^\sigma$ such that
\begin{equation}\label{Var-equ1-3}
       \begin{array}{l l}
       a(u,v)+b(u,v)=\ell(v),~~~~~\forall v\in H^{1}(\Omega_{F})^\sigma,\\
  \end{array}
\end{equation}
where
\begin{equation}\label{Var-equ2-3}
  a(u,v):=
    \begin{aligned}
  \int_{\Omega_{F}}\nabla u \cdot \nabla \bar{v}\mathrm{d}{x}-k^{2}\int_{\Omega_{F}}nu\bar{v}d x,
  \end{aligned}
\end{equation}
\begin{equation}\label{Var-equ3-3}
   b(u,v):=-i\alpha\int_{\Gamma_0}u\cdot \ov{v}ds+\int_{\Gamma_0} k^{2}\Lambda_\alpha V_{\Omega_F}((1-n)u)\cdot \ov{v}ds,
\end{equation}
and
\begin{equation}\label{Var-equ4-3}
  \ell(v):= \int_{\Gamma_0} \Lambda_\alpha(u^i)\cdot \ov{v}ds.
\end{equation}
It can proved that the variational problem (\ref{Var-equ1-3}) admits a unique solution $u\in H^1(\Omega_F)^\sigma$.
\section{Finite element approximation}
\label{sec:4}
\noindent
In this section, we consider the finite element discretization of the variational problem (\ref{Var-equ1}) and derive a priori error estimates. The corresponding analysis for the variational problems (\ref{Var-equ1-2}) and (\ref{Var-equ1-3}) follows analogously and thus is omitted. For simplicity, here we employ the standard finite element method based on continuous piecewise linear basis on a triangulation $\mathcal{T}_h$ of $\Omega_F$:
\begin{equation*}
  \mathcal{T}_{h}=\{K|\cup \overline{K}=\overline{\Omega_{F}}, \mathrm{diameter}(K)\leq h\},
  \end{equation*}
with $h>0$ being the maximum diameter. To match the smooth boundaries $\Gamma_0$ and $\Gamma$, there must be some 'pi-shaped' elements whose one edge is located on $\Gamma_0$ or $\Gamma$. But here, we assume that $\Gamma$ and $\Gamma_0$ are approximated well by polygons and omit the discussion of this approximation error. Let $S_h\subset H^1(\Omega_F)^\sigma$ be the finite element space and the approximation property is satisfied:
\ben
\inf_{\xi\in S_h}\left\{ \Vert v-\xi\Vert_{L^2(\Omega_{F})^\sigma}+ h|v-\xi|_{H^1(\Omega_{F})^\sigma}\right\}\le ch^{1+\alpha}\Vert v\Vert_{H^{1+\alpha}(\Omega_{F})^\sigma},
\enn
for all $v\in H^{1+\alpha}(\Omega_{F})^\sigma, 0\le \alpha\le 1$ is satisfied. The Galerkin formulation of the problem \eqref{Var-equ1} reads: find $u_{h}\in S_{h}$ such that
\begin{equation}\label{EFin-equ1}
a(u_{h},v_{h})+b(u_{h},v_{h})=\ell(v_{h}),~\forall v_{h}\in  S_{h}.
\end{equation}
It can be derived from \cite{HNPX11, HSS17} that the sesquilinear form $a(\cdot,\cdot)+b(\cdot,\cdot)$ satisfies a discrete inf-sup condition
\begin{equation}\label{Fin-equ2}
\sup_{0\neq w_{h}\in  S_{h}}\frac{|a(v_{h},w_{h})+b(v_{h},w_{h})|}{\Vert w_{h}\Vert_{H^1(\Omega_F)^\sigma}}\gtrsim \Vert v_{h}\Vert _{H^1(\Omega_F)^\sigma},~~~\forall v_{h}\in  S_{h}.
\end{equation}
\begin{theorem}
\label{theorem-2}
The discrete variational problem \eqref{EFin-equ1} admits a unique solution $u_{h}\in S_h$. Moreover, if $u\in  H^{1+\alpha_{1}}(\Omega_{F})^\sigma$ for some $0\leq \alpha_{1}\leq1$, there exists a constant $h_{0}>0$ such that for any $0 < h \leq h_{0}$
\begin{equation}\label{Fin-equ3}
\Vert u-u_{h}\Vert_{H^{1}(\Omega_{F})^\sigma}\lesssim  h^{\alpha_{1}}\Vert u\Vert_{H^{1+\alpha_{1}}(\Omega_{F})^\sigma}.
\end{equation}
\end{theorem}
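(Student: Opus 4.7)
The plan is a standard Schatz-type quasi-optimality argument, built on top of the discrete inf-sup condition \eqref{Fin-equ2} that is stated and cited from \cite{HNPX11,HSS17}. First I would dispose of existence and uniqueness of the discrete solution: since $S_h$ is finite-dimensional, it suffices to show uniqueness, and if $u_h\in S_h$ solves the homogeneous problem ($\ell=0$), then taking the supremum in \eqref{Fin-equ2} over $w_h\in S_h$ forces $\|u_h\|_{H^1(\Omega_F)^\sigma}=0$. Existence of $u_h$ then follows by a dimension count (square linear system with trivial kernel). This step also fixes the threshold $h_0$: the inf-sup constant in \eqref{Fin-equ2} is only valid for $h$ smaller than some $h_0>0$ arising from the Schatz duality argument used in \cite{HNPX11,HSS17}.

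Next I would derive the error estimate via the Céa-type route. The continuous variational equation \eqref{Var-equ1} combined with the discrete equation \eqref{EFin-equ1} gives Galerkin orthogonality
\begin{equation*}
a(u-u_h,v_h)+b(u-u_h,v_h)=0,\quad\forall v_h\in S_h.
\end{equation*}
For any $\xi\in S_h$, apply the discrete inf-sup \eqref{Fin-equ2} to $u_h-\xi\in S_h$ and insert Galerkin orthogonality:
\begin{equation*}
\Vert u_h-\xi\Vert_{H^1(\Omega_F)^\sigma}\lesssim \sup_{0\ne w_h\in S_h}\frac{|a(u_h-\xi,w_h)+b(u_h-\xi,w_h)|}{\Vert w_h\Vert_{H^1(\Omega_F)^\sigma}}=\sup_{0\ne w_h\in S_h}\frac{|a(u-\xi,w_h)+b(u-\xi,w_h)|}{\Vert w_h\Vert_{H^1(\Omega_F)^\sigma}}.
\end{equation*}
The boundedness \eqref{Var-bound} of the sesquilinear form then bounds the right-hand side by $\|u-\xi\|_{H^1(\Omega_F)^\sigma}$, and a triangle inequality yields the quasi-optimal estimate $\|u-u_h\|_{H^1(\Omega_F)^\sigma}\lesssim \inf_{\xi\in S_h}\|u-\xi\|_{H^1(\Omega_F)^\sigma}$.

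Finally I would invoke the approximation property of $S_h$ stated just before the theorem: taking $\xi$ to be the best approximant (or nodal interpolant) of $u\in H^{1+\alpha_1}(\Omega_F)^\sigma$ gives $\inf_{\xi\in S_h}\|u-\xi\|_{H^1(\Omega_F)^\sigma}\lesssim h^{\alpha_1}\|u\|_{H^{1+\alpha_1}(\Omega_F)^\sigma}$, which combined with the quasi-optimal bound produces \eqref{Fin-equ3}.

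The main obstacle is, strictly speaking, verifying the discrete inf-sup condition \eqref{Fin-equ2}, because the form $a+b$ only satisfies a Gårding inequality and not coercivity. However, this is exactly the content that is borrowed from \cite{HNPX11,HSS17}: the argument there uses the continuous inf-sup from Theorem \ref{Thm-wellposedness}, the Gårding inequality \eqref{Var-garding}, an adjoint regularity estimate on $\Omega_F$, and a duality (Aubin--Nitsche) trick to absorb the compact lower-order perturbation into the principal part provided $h\le h_0$. The only nontrivial verification is that the off-diagonal non-local part of $b(\cdot,\cdot)$, involving the layer potentials $D_\Gamma$ on the well-separated pair $(\Gamma,\Gamma_0)$, is smoothing and hence compact from $H^1(\Omega_F)^\sigma$ to itself, which was already established in the proof of Lemma \ref{Thm-garding} via the bounds on $T_y u^*(x,y)$ and $T_x(T_y u^*(x,y))^\top$ for $x\in\Gamma_0$, $y\in\Gamma$; so the abstract framework of \cite{HNPX11,HSS17} applies verbatim, and no further work beyond citation is required.
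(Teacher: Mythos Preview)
Your proposal is correct and follows essentially the same route as the paper's proof: discrete inf-sup \eqref{Fin-equ2} applied to $u_h-\xi$, Galerkin orthogonality to pass to $u-\xi$, boundedness \eqref{Var-bound}, triangle inequality, and the approximation property of $S_h$. Your additional remarks on existence/uniqueness and on the provenance of $h_0$ from the Schatz argument are accurate elaborations of points the paper simply takes for granted.
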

\begin{proof}
According to the discrete inf-sup condition \eqref{Fin-equ2} and the Galerkin orthogonality from \eqref{Var-equ1} and \eqref{EFin-equ1}, for $\forall v_{h}\in S_{h}$, we get
 \begin{equation}\label{Fin-equ4}
\begin{aligned}
\Vert u_{h}-v_{h}\Vert_{ H^{1}(\Omega_{F})^\sigma}&\lesssim \sup_{0\neq w_{h}\in S_{h}}\frac{|a(u_{h}-v_{h},w_{h})+b(u_{h}-v_{h},w_{h})|}{\Vert w_{h}\Vert_{ H^{1}(\Omega_{F})^\sigma}}\\
&=\sup_{0\neq w_{h}\in S_{h}}\frac{|a(u-v_{h},w_{h})+b(u-v_{h},w_{h})|}{\Vert w_{h}\Vert_{ H^{1}(\Omega_{F})^\sigma}} \nonumber \\
& \lesssim \Vert u-v_{h}\Vert_{H^{1}(\Omega_{F})^\sigma}.
\end{aligned}
\end{equation}
Then the triangular inequality means
\begin{equation*}\label{Fin-equ5}
\begin{aligned}
\Vert u-u_{h}\Vert_{H^{1}(\Omega_{F})^\sigma}
&\leq\Vert u-v_{h}\Vert_{ H^{1}(\Omega_{F})^\sigma}+\Vert v_{h}-u_{h}\Vert_{H^{1}(\Omega_{F})^\sigma} \nonumber \\
&\lesssim \Vert u-v_{h}\Vert_{H^{1}(\Omega_{F})^\sigma}.
\end{aligned}
\end{equation*}
Therefore,
\begin{equation}\label{Fin-equ6}
 \Vert u-u_{h}\Vert_{H^{1}(\Omega_{F})^\sigma}\lesssim \inf_{\xi\in S_h} \Vert u-\xi\Vert_{H^1(\Omega_{F})^\sigma}\lesssim h^{\alpha_{1}}\Vert u\Vert_{H^{1+\alpha_{1}}(\Omega_{F})^\sigma},
  \end{equation}
which completes the proof.
\end{proof}

To derive the $L^{2}$-error estimate, we define the following dual problem:
\begin{align}
\mathcal{L}w &=  e \quad\mathrm{in}\quad\Omega_{F}, \label{Fin-equ7-1}\\
\Lambda_\alpha^*w &= 0  \quad\mathrm{on}\quad\Gamma_0,\label{Fin-equ7-2}\\
Tw+i\alpha TS_{\Gamma_0}^*w+ N_{\Gamma_0}^* w &= 0 \quad\mathrm{on}\quad\Gamma,\label{Fin-equ7-3}
\end{align}
and assume that the following regularity holds
\begin{equation}\label{Fin-equ8}
\Vert w\Vert_{H^{1+\alpha_{2}}(\Omega_{F})^\sigma} \lesssim\Vert e\Vert_{L^{2}(\Omega_{F})^\sigma}.
\end{equation}
Here, $\Lambda_\alpha^*v=Tv+i\alpha v$ and the integral operators $S_{\Gamma_0}^*$ and $N_{\Gamma_0}^*$ are defined as
\begin{equation*}
  \begin{aligned}
 &S_{\Gamma_0}^*(w)(x)=\int_{\Gamma} \ov{u^{\ast}(x,y)}w(y)ds_{y},\quad x\in\Gamma_{0},\\
 &N_{\Gamma_0}^*(w)(x)=\int_{\Gamma} T_x(T_{y}\ov{u^{\ast}(x,y)})^{\top}w(y)ds_{y},\quad x\in\Gamma_{0}.
  \end{aligned}
 \end{equation*}

\begin{theorem}
\label{theorem-3}
If $u\in  H^{1+\alpha_{1}}(\Omega_{F})^\sigma$ for some $\frac{1}{2}< \alpha_{1}\leq1$, there exists a constant $h_{0}>0$ such that for any $0 < h \leq h_{0}$ and $\frac{1}{2}< \alpha_{2}\leq1$
\begin{equation}\label{Fin-equ7}
\Vert u-u_{h}\Vert_{L^{2}(\Omega_{F})^\sigma}\lesssim h^{\alpha_{1}+\alpha_2}\Vert u\Vert_{H^{\alpha_{1}}(\Omega_{F})^\sigma}.
\end{equation}
\end{theorem}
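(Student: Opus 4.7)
The plan is to run the Aubin--Nitsche duality argument built around the dual problem (\ref{Fin-equ7-1})--(\ref{Fin-equ7-3}). Set $e=u-u_h$ and take $e$ as the right-hand side in the dual problem. The first step is to establish the fundamental identity
\begin{equation*}
\|e\|_{L^2(\Omega_F)^\sigma}^2 = \overline{a(e,w)+b(e,w)}
\end{equation*}
via Green's formula on $\Omega_F$ combined with the mapping properties of the layer potentials. Integration by parts in $a(e,w)$ produces $(e,\mathcal{L}w)_{L^2}=\|e\|_{L^2}^2$ together with boundary terms on $\Gamma_0$ and $\Gamma$. The Robin-type condition $\Lambda_\alpha^* w=0$ on $\Gamma_0$ is exactly what is needed to annihilate the $\Gamma_0$ boundary contribution of $a$ against the pointwise part $-i\alpha\int_{\Gamma_0} u\cdot\ov{v}\,ds$ of $b$, while the nonlocal summand $-\int_{\Gamma_0}\Lambda_\alpha D_\Gamma(e)\cdot\ov{w}\,ds$ is transposed by Fubini to produce the adjoint operators $S_{\Gamma_0}^*$ and $N_{\Gamma_0}^*$ evaluated on $\Gamma$; the condition (\ref{Fin-equ7-3}) is then tailored precisely to cancel these remaining $\Gamma$ contributions.

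With that identity in hand, the Galerkin orthogonality obtained by subtracting (\ref{EFin-equ1}) from (\ref{Var-equ1}) yields, for any $w_h\in S_h$,
\begin{equation*}
\|e\|_{L^2(\Omega_F)^\sigma}^2 = \overline{a(e,w-w_h)+b(e,w-w_h)}.
\end{equation*}
Choosing $w_h$ to be the standard finite element interpolant of $w$, the continuity bound (\ref{Var-bound}) together with the assumed approximation property gives
\begin{equation*}
\|e\|_{L^2(\Omega_F)^\sigma}^2 \;\lesssim\; \|e\|_{H^1(\Omega_F)^\sigma}\, h^{\alpha_2}\,\|w\|_{H^{1+\alpha_2}(\Omega_F)^\sigma}.
\end{equation*}
The regularity hypothesis (\ref{Fin-equ8}) turns the last factor into $\|e\|_{L^2(\Omega_F)^\sigma}$, and Theorem~\ref{theorem-2} delivers $\|e\|_{H^1(\Omega_F)^\sigma}\lesssim h^{\alpha_1}\|u\|_{H^{1+\alpha_1}(\Omega_F)^\sigma}$. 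Dividing through by $\|e\|_{L^2(\Omega_F)^\sigma}$ closes the estimate at the rate $h^{\alpha_1+\alpha_2}$.

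The main obstacle is the first step: rigorously justifying the adjoint identity in the presence of the nonlocal term $\int_{\Gamma_0}\Lambda_\alpha D_\Gamma(u)\cdot\ov{v}\,ds$ in $b$. One has to interchange integration between $\Gamma$ and $\Gamma_0$ and shift the normal-derivative operator $T_y$ from the double-layer kernel onto its conjugate, which is how the operators $S_{\Gamma_0}^*$ and $N_{\Gamma_0}^*$ acquire the form given in the excerpt. The restriction $\alpha_1,\alpha_2>1/2$ stated in the theorem should reflect exactly the Sobolev regularity needed to give pointwise (or at least $L^2$) meaning to the traces $Tw|_\Gamma$ and $N_{\Gamma_0}^* w|_\Gamma$ appearing in (\ref{Fin-equ7-3}), so that these boundary manipulations are admissible rather than merely formal; a secondary point that deserves a line or two is the a priori verification of (\ref{Fin-equ8}), which is a shift theorem for an elliptic system with smoothly perturbed Robin-type data and follows from standard arguments once the dual problem has been shown to be well posed in the Fredholm-alternative framework of Theorem~\ref{Thm-wellposedness}.
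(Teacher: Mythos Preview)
Your proposal is correct and mirrors the paper's proof: both run the Aubin--Nitsche duality argument with the dual problem (\ref{Fin-equ7-1})--(\ref{Fin-equ7-3}), verify via integration by parts and Fubini that $(u-u_h,e)_{L^2(\Omega_F)^\sigma}=a(u-u_h,w)+b(u-u_h,w)$ (your conjugated version is equivalent since the left side is real once $e=u-u_h$), then invoke Galerkin orthogonality, the boundedness (\ref{Var-bound}), the approximation property, the regularity assumption (\ref{Fin-equ8}), and the $H^1$ estimate (\ref{Fin-equ3}). Your additional remarks on why the dual boundary conditions are tailored to absorb the transposed nonlocal term and on the role of the restriction $\alpha_1,\alpha_2>\tfrac12$ go slightly beyond what the paper spells out but are in the same spirit.
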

\begin{proof}
Multiplying the conjugates of \eqref{Fin-equ7-1} with $u-u_{h}$ and integrating by parts yield
\begin{equation*}
  \begin{aligned}
&\quad (u-u_{h},e)_{L^{2}(\Omega_{F})^\sigma} \\
&=a(u-u_h,w) +\int_{\Gamma} (u-u_h)\cdot T\ov{w}ds- \int_{\Gamma_0} (u-u_h)\cdot T\ov{w}ds\\
&= a(u-u_h,w) +\int_{\Gamma} (u-u_h)\cdot (\ov{-i\alpha TS_{\Gamma_0}^*w- N_{\Gamma_0}^* w})ds \\
&\quad\quad +\int_{\Gamma_0} (u-u_h)\cdot \ov{i\alpha w}ds\\
&= a(u-u_h,w) -i\alpha \int_{\Gamma_0} (u-u_h)\cdot \ov{w}ds \\
&\quad\quad +i\alpha \int_{\Gamma}\int_{\Gamma_0} (u(x)-u_h(x))^\top T_y u^*(x,y) \ov{w}(y)ds_yds_x\\
&\quad\quad -\int_{\Gamma}\int_{\Gamma_0} (u(x)-u_h(x))^\top T_x(T_yu^*(x,y))^\top \ov{w}(y)ds_yds_x\\
&= a(u-u_h,w) +b(u-u_h,w).
  \end{aligned}
 \end{equation*}
Noting the Galerkin orthogonality
\ben
a(u-u_{h},\xi)+b(u-u_{h},\xi)=0,\quad \forall \xi \in S_{h},
\enn
it follows that, for $\forall \xi \in S_{h}$,
\begin{equation}\label{Fin-equ15}
\begin{aligned}
&\quad a(u-u_{h},w)+b(u-u_{h},w)\\
&\lesssim \Vert u-u_{h}\Vert_{H^{1}(\Omega_{F})^\sigma} \inf_{\xi\in S_h}\Vert w-\xi\Vert_{H^{1}(\Omega_{F})^\sigma}  \\
&\lesssim h^{\alpha_{1}+\alpha_{2}}\Vert u\Vert_{H^{1+\alpha_1}(\Omega_{F})^\sigma}\Vert w\Vert_{H^{1+\alpha_{2}}(\Omega_{F})^\sigma}.
\end{aligned}
\end{equation}
Taking $e=u-u_{h}$ gives the final result and this completes the proof.
\end{proof}

\section{Numerical experiments}
\label{sec:5}
\noindent
This section presents a variety of numerical tests that demonstrate the accuracy and efficiency of the proposed scheme and we always set the impedance coefficient $\alpha=2$. Five different types of the boundary will be considered as follows:
\begin{align*}
\mathrm{circle (C)}:\;\; &x(\theta)=(\cos\theta,\sin\theta),\theta\in[0,2\pi),\\
\mathrm{ellipse (E)}:\;\; &x(\theta)=(3\cos\theta,2\sin\theta),\theta\in[0,2\pi),\\
\mathrm{kite (K)}:\;\;&x(\theta)=(\cos\theta+0.65\cos2\theta-0.65,1.5\sin\theta),\theta\in[0,2\pi),\\
\mathrm{peanut (P)}:\;\;&x(\theta)=(\sqrt{\cos^{2}\theta+0.25\sin^{2}\theta})(\cos\theta,\sin\theta),\theta\in[0,2\pi),\\
\mathrm{star (S)}:\;\; &x(\theta)=(1+0.3\cos5\theta)(\cos\theta,\sin\theta),\theta\in[0,2\pi),
\end{align*}
and we use $Y\star X$ to present that the boundary $\Gamma$ is of type $X$ and the artificial boundary $\Gamma_0$ is of type $Y$ with another appropriate size to enclose $\ov{\Omega}$ where $X,Y\in\{C, E, K, S, RT\}$.

{\bf Example 1.}(Laplace case) In the first example, we consider the problem of Laplace equation and the exact solution is given by
\begin{equation*}
u(x)=\frac{-x_{1}}{x_{1}^2+x_{2}^2}.
\end{equation*}
Fig.~\ref{NumExample1.1} shows the optimal convergence orders $\Vert u-u_{h}\Vert_{{L}^{2}(\Omega_F)}=\mathcal{O}(h^{2})$ and $\Vert u-u_{h}\Vert_{{H}^{1}(\Omega_F)}=\mathcal{O}(h)$ for the $C\star K$ and $C\star S$ boundaries.

\begin{figure}[htbp]
\center
\includegraphics[scale=0.6]{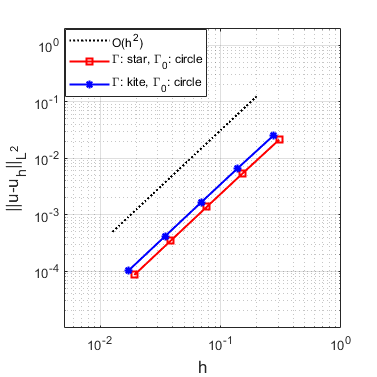}
\includegraphics[scale=0.6]{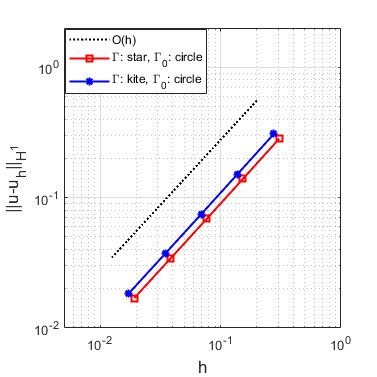}
\caption{Example 1. Log-log plot of the numerical errors in $L^{2}$-norm (left) and $H^{1}$-norm (right) for the $C\star K$ and $C\star S$ boundaries}
\label{NumExample1.1}
\end{figure}

{\bf Example 2.}(Acoustic case) In the second example, we consider the acoustic scattering problem and the exact solution is given by
\begin{equation*}
u(x)=H_{0}^{(1)}(k|x|).
\end{equation*}
Fig.~\ref{NumExample2.1} shows the optimal convergence orders $\Vert u-u_{h}\Vert_{{L}^{2}(\Omega_F)}=\mathcal{O}(h^{2})$ and $\Vert u-u_{h}\Vert_{{H}^{1}(\Omega_F)}=\mathcal{O}(h)$ for the case of $P\star S$ boundary and different wave numbers $k$. Then we consider the scattering of a plane incident wave $u^i=e^{ikx\cdot \bm{d}}$ with the propagation direction $\bm{d}=(1,0)^\top$. The numerical total field for the $K\star K$ boundary and $k=3$ are presented in Fig.~\ref{NumExample2.2}.

\begin{figure}[htbp]
\center
\includegraphics[scale=0.6]{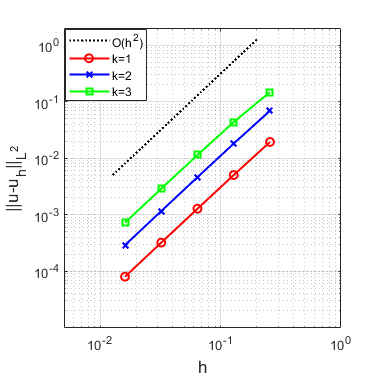}
\includegraphics[scale=0.6]{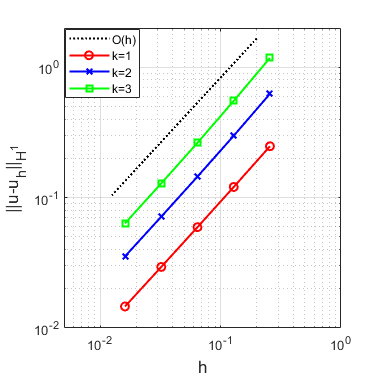}
\caption{Example 2. Log-log plot of the numerical errors in $L^{2}$-norm (left) and $H^{1}$-norm (right) for $P\star S$ boundary}
\label{NumExample2.1}
\end{figure}%

\begin{figure}[htbp]
\center
\includegraphics[scale=1.0]{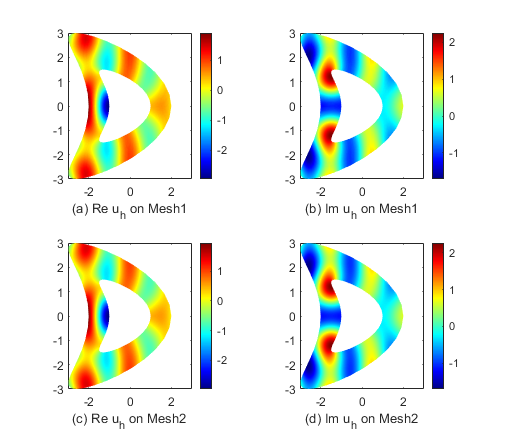}
\caption{Example 2. Numerical total field for the problem of acoustic scattering of a plane incident wave with $K\star K$ boundary. Mesh 1: 2288 triangles; Mesh 2: 146432 triangles}
\label{NumExample2.2}
\end{figure}

{\bf Example 3.}(Lam\'e case) Next, we consider the problem of Lam\'e equation and the exact solution is given by
\begin{equation*}
 \left\{
  \begin{aligned}
  &u_{1}(x, y)=-\left(\log r-\frac{(\lambda+\mu)(x_{1}-y_{1})^{2}}{r^{2}(\lambda+3\mu)}\right)\frac{\lambda+3\mu}{4\pi\mu(\lambda+2\mu)},\\
&u_{2}(x, y)=\frac{(\lambda+\mu)(x_{1}-y_{1})(x_{2}-y_{2})}{4\pi\mu r^{2}(\lambda+2\mu)},
  \end{aligned}
  \right.
\end{equation*}
where $r=\sqrt{(x_{1}-y_{1})^{2}+(x_{2}-y_{2})^{2}}$ and we choose $\mu=2$, $\lambda=3$, $(y_{1}, y_{2})=(0.5,0)^\top$. The numerical errors presented in Fig.~\ref{NumExample3.1} demonstrate the optimal convergence orders $\Vert u-u_{h}\Vert_{{L}^{2}(\Omega_F)^2}=\mathcal{O}(h^{2})$ and $\Vert u-u_{h}\Vert_{{H}^{1}(\Omega_F)^2}=\mathcal{O}(h)$ for the $C\star K$ and $E\star S$ boundaries.

\begin{figure}[htbp]
\begin{center}
     \includegraphics[width=6cm]{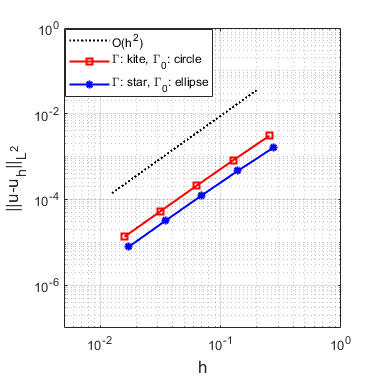}
     \includegraphics[width=6cm]{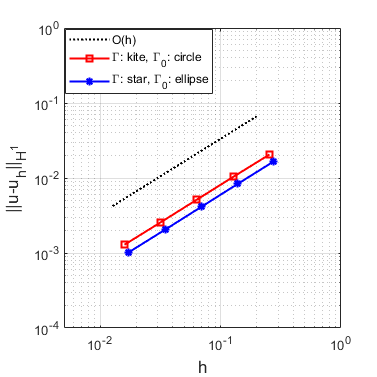}
\caption{Example 3. Log-log plot of the numerical errors in $L^{2}$-norm (left) and $H^{1}$-norm (right) for $C\star K$ and $E\star S$ boundaries}
\label{NumExample3.1}
\end{center}
\end{figure}

{\bf Example 4.}(Elastic case) Finally, we test the accuracy of the proposed scheme for solving the elastic scattering problem. Let the exact solution be given by
\begin{equation*}
u(x)=-\nabla H_{0}^{(1)}(k_{p}|x|),
\end{equation*}
and we set $\rho=0.5, \mu=2, \lambda=0.5$. Fig.~\ref{NumExample4.1} verifies the optimal convergence orders $\Vert u-u_{h}\Vert_{{L}^{2}(\Omega_F)^2}=\mathcal{O}(h^{2})$ and $\Vert u-u_{h}\Vert_{{H}^{1}(\Omega_F)^2}=\mathcal{O}(h)$ for the case of $C\star K$ boundary.

\begin{figure}[htbp]
\center
     \includegraphics[width=6cm]{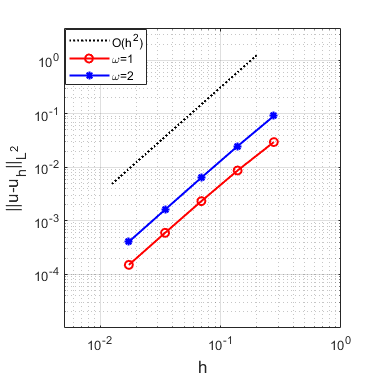}
     \includegraphics[width=6cm]{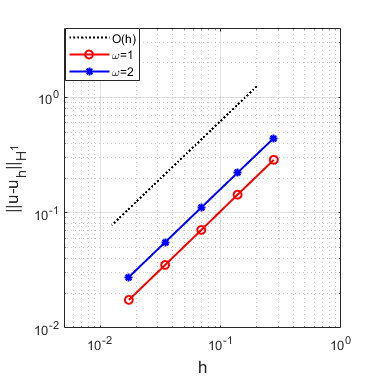}
\caption{Example 4. Log-log plot of the numerical errors in $L^{2}$-norm (left) and $H^{1}$-norm (right) for $C\star K$ boundary}
\label{NumExample4.1}
\end{figure}

\section*{Acknowledgments}
The work of M. Li is partially supported by the NSFC Grants 12271082 and 62231016.  T. Yin gratefully acknowledges support from NSFC through Grants 12171465 and 12288201.

\end{document}